\theoremstyle{plain}
\newtheorem{theorem}{Theorem}[section]
\newtheorem{lemma}[theorem]{Lemma}
\newtheorem{proposition}[theorem]{Proposition}
\theoremstyle{definition}
\newtheorem{definition}[theorem]{Definition}
\theoremstyle{remark}
\newtheorem{remark}[theorem]{Remark}
\newcommand{\R}{\mathbb{R}}
\newcommand{\M}{\widetilde{M}}
\newcommand{\incl}{\hookrightarrow}
\begin{document}

\title{Algebraic and Geometric intersection numbers for free groups}

\author{Siddhartha Gadgil}

\address{   Department of Mathematics,\\
        Indian Institute of Science,\\
        Bangalore 560003, India}

\email{gadgil@isibang.ac.in}

\author{Suhas Pandit}
\address{  Stat-math Unit,\\
          Indian Statistical Institute,\\
       Bangalore 560059, India}
\email{suhas@isibang.ac.in}

\date{\today}

\subjclass{Primary 57M05 ; Secondary 57M07, 20E06}

\begin{abstract}
We show that the algebraic intersection number of Scott and Swarup for
splittings of free groups coincides with the geometric intersection
number for the sphere complex of the connected sum of copies of
$S^2\times S^1$.

\end{abstract}

\maketitle

\section{Introduction}

The \emph{geometric intersection number} of homotopy classes of
(simple) closed curves on a surface is the minimum number of
intersection points of curves in the homotopy classes. This is a
much studied concept and has proved to be extremely useful in
low-dimensional topology.

Scott and Swarup~\cite{SS} introduced an algebraic analogue, called
the \emph{algebraic intersection number}, for a pair of
\emph{splittings} of groups. This is based on the associated partition
of the ends of a group~\cite{St}. Splittings of groups are the natural
analogue of simple closed curves on a surface $F$ -- splittings of
$\pi_1(F)$ corresponding to homotopy classes of simple closed curves
on $F$. Scott and Swarup showed that, in the case of surfaces, the
algebraic and geometric intersection numbers coincide.

We show here that the analogous result holds for free groups,
viewed as the fundamental group of the connected sum $ M =
\sharp_n S^2 \times S^1 $ of $n$ copies of $S^2 \times S^1$.
Observe that this is a closed $3$-manifold with fundamental group
the free group on $n$ generators. Thus, the manifold can be
regarded as a model for studying the free group and its
automorphisms.

Embedded spheres in $M$ correspond to splittings of the free
group. Hence, given a pair of embedded spheres in $M$, we can
consider their \emph{geometric intersection number} (defined
below) as well as the \emph{algebraic intersection number} of
Scott and Swarup for the corresponding splittings. Our main
result is that, for embedded spheres in $M$ these two
intersection numbers coincide. The principal method we use is the
normal form for embedded spheres developed by Hatcher.

Before stating our result, we recall the definition of the
intersection numbers.

\begin{definition}
Let $A$ and $B$ be two isotopy classes of embedded spheres $S$ and
$T$, respectively, in $M$. The \emph{geometric intersection number}
$I(A,B)$ of $A$ and $B$ is defined as the minimum of the number of
components $| S\cap T |$ of $S\cap T$ over embedded transversal
spheres $S$ and $T$ representing the isotopy classes $A$ and $B$,
respectively.
\end{definition}

This is clearly symmetric. Further, for an embedded sphere $S$, if
$A=[S]$ then $I(A,A)=0$.

We consider next the {algebraic intersection number}. Let
$\widetilde{M}$ be the universal cover of $M$. Observe that
$\pi_2(M)=\pi_2(\M)=H_2(\M)$. The fundamental group $\pi_1(M)=G$ of
$M$, which is a free group of rank $n$, acts freely on the universal
cover $\widetilde{M}$ of $M$ by deck transformations.  Homotopy
classes of spheres in $M$ correspond to equivalence classes of
elements in $H_2(\M)$ up to the action of deck transformations. For
embedded spheres, we can consider isotopy classes instead of homotopy
classes as the homotopy classes of embedded spheres are the same as
isotopy classes of embedded spheres~\cite{La}.

For an embedded sphere $S\in M$ with lift $\widetilde{S}\in
\widetilde{M}$, all the translates of $\widetilde{S}$ are embedded and
disjoint from $\widetilde{S} $. In particular, if
$\widetilde{A}=[\widetilde{S}]$ is the isotopy class represented by
$\widetilde{S}$, then $\widetilde{A}$ and $g\widetilde{A}$ can be
represented by disjoint embedded spheres for each deck transformation
$g \in G$.

\begin{definition}

Let $A = [S]$ and $B = [T]$ be two isotopy classes of embedded spheres
$S$ and $T$, respectively, in $M$. Let $\widetilde{A} = [\widetilde{S}]$
and $\widetilde{B} = [\widetilde{T}]$, where $\widetilde{S}$ and
$\widetilde{T}$ are the lifts of $S$ and $T$, respectively, to
$\widetilde{M}$. The \emph{algebraic intersection number}
$\widetilde{I}(A,B)$ of $A$ and $B$ is defined as the number of
translates $g\widetilde{B}$ of $\widetilde{B}$ such that
$\widetilde{A}$ and $g\widetilde{B}$ can not be represented by
disjoint embedded spheres in $\widetilde{M}$.
\end{definition}
It was shown in ~\cite{Ga} that this coincides with the algebraic
intersection number of Scott and Swarup.

We say that two isotopy classes $\widetilde{A}= [\widetilde{S}]$ and
$\widetilde{B} = [\widetilde{T}]$ of embedded spheres in
$\widetilde{M}$ \emph{cross} if they cannot be represented by disjoint
embedded spheres. Thus, the algebraic intersection number is the
number of elements $g\in \pi_1(M)$ such that $\widetilde{A}$ and
$g\widetilde{B}$ cross. We shall also say that $\widetilde{S}$ and
$\widetilde{T}$ cross if the classes they represent cross.

It is immediate that $\widetilde{A}$ and $g\widetilde{B}$ cross if and
only if $g^{-1}\widetilde{A}$ and $\widetilde{B}$ cross. It follows
that $\tilde{I}(A,B)=\tilde{I}(B,A)$. Thus the \emph{algebraic
intersection number} is symmetric.

Clearly, for all but finitely many translates $g\widetilde{B}$ of
$\widetilde{B}$, $\widetilde{A}$ and $g\widetilde{B}$ can be
represented by disjoint embedded spheres in $\widetilde{M}$. This
is because, for any pair of embedded spheres $S$ and $T$ in $M$,
all but finitely many translates of $\widetilde{T}$ are disjoint
from $\widetilde{S}$ in $ \widetilde{M}$. Hence
$\widetilde{I}(A,B)$ is finite for all isotopy classes $A$ and
$B$ of embedded spheres in $M$.

As was shown in~\cite{Ga}, it follows from results of Scott and Swarup
that if the algebraic intersection number between classes $A$ and $B$
as above vanishes, then they can be represented by disjoint embedded
spheres, i.e., their geometric intersection number vanishes. The
converse is an easy observation.

We prove here a much stronger result -- that the algebraic and
geometric intersection numbers are equal.

\begin{theorem}\label{T:main}
For isotopy classes $A$ and $B$ of embedded spheres in $M$,
$\widetilde{I}(A,B)=I(A,B)$.
\end{theorem}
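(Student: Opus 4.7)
The plan is to prove the two inequalities $\widetilde{I}(A,B)\leq I(A,B)$ and $I(A,B)\leq \widetilde{I}(A,B)$ separately; the first is essentially definitional, while the second is where Hatcher's normal form does the real work.

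For the first inequality I would choose transverse embedded representatives $S\in A$ and $T\in B$ with $|S\cap T|=I(A,B)$, and fix a lift $\widetilde{S}$ of $S$. Because the covering projection restricts to a homeomorphism $\widetilde{S}\to S$, every intersection circle $C\subset S\cap T$ has a unique preimage $\widetilde{C}\subset \widetilde{S}$, and this $\widetilde{C}$ lies in $\widetilde{S}\cap g\widetilde{T}$ for a single deck transformation $g$. Partitioning by $g$ gives the identity
\[
|S\cap T|\;=\;\sum_{g\in G}\bigl|\widetilde{S}\cap g\widetilde{T}\bigr|.
\]
Whenever $\widetilde{A}$ and $g\widetilde{B}$ cross, the corresponding summand is at least $1$, since disjoint representatives of the two classes cannot exist. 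Summing over all crossing translates yields $|S\cap T|\geq \widetilde{I}(A,B)$, hence $I(A,B)\geq \widetilde{I}(A,B)$.

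For the reverse inequality I would extend a representative $S\in A$ to a maximal sphere system $\Sigma$ in $M$ and apply Hatcher's normal form theorem to isotope $T\in B$ into normal form with respect to $\Sigma$. This isotopy lifts equivariantly, so every translate $g\widetilde{T}$ is simultaneously in normal form with respect to $\widetilde{\Sigma}$, and the identity above continues to hold. I would then establish the two claims (a) whenever $\widetilde{S}\cap g\widetilde{T}\neq \emptyset$ the isotopy classes $\widetilde{A}$ and $g\widetilde{B}$ cross, and (b) for every such $g$ the intersection $\widetilde{S}\cap g\widetilde{T}$ consists of exactly one circle. Combining (a) and (b) with the identity gives $|S\cap T|\leq \widetilde{I}(A,B)$, completing the proof.

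Both (a) and (b) rest on the rigidity of normal form. For (a), a disjointing isotopy between $\widetilde{S}$ and $g\widetilde{T}$ in $\widetilde{M}$ would, by equivariance and an innermost-disk argument carried out inside the simply-connected complementary pieces of $\widetilde{\Sigma}$, descend to an isotopy of $T$ in $M$ strictly reducing $|T\cap\Sigma|$, contradicting the minimality of normal form. For (b), two distinct intersection circles $C_1,C_2\subset\widetilde{S}\cap g\widetilde{T}$ would cobound subannuli on each of the two spheres; a disk-exchange along an innermost such annulus would likewise descend to an intersection-reducing isotopy of $T$ in $M$, again contradicting normal form. The principal obstacle is executing these surgery and disk-exchange steps with care: $\widetilde{M}$ is not $S^{3}$ and contains many essential spheres, so classical Schoenflies and ball-bounding arguments do not apply directly. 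One must instead stay within the combinatorial framework of disks in the complementary pieces of $\widetilde{\Sigma}$ supplied by Hatcher's normal form, where these exchanges can be controlled and the descent to $M$ is automatic by equivariance.
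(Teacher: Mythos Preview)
Your overall architecture matches the paper: the inequality $\widetilde{I}(A,B)\le I(A,B)$ is the easy direction and your counting argument is essentially the paper's; for the hard direction both you and the paper put one sphere into Hatcher normal form relative to a maximal system containing the other, and reduce to the two claims (a) intersection implies crossing and (b) each translate is met in at most one circle.

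The substantive divergence is in how (a) and (b) are proved, and your sketches for both have real gaps. For (b) you propose an annulus disk-exchange, but the paper's argument is immediate from Hatcher's combinatorics and avoids surgery entirely: the normal-form sphere (your $g\widetilde{T}$) determines a tree $T(g\widetilde{T})$ that \emph{embeds} in the Bass--Serre tree $T$ of $\widetilde{\Sigma}$, and each lift $\widetilde{S}\subset\widetilde{\Sigma}$ is a single bivalent vertex of $T$; hence $g\widetilde{T}$ can meet $\widetilde{S}$ in at most one circle. No minimality or exchange is needed.

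For (a) your proposed contradiction---that a disjointing isotopy between the single lift $\widetilde{S}$ and the single translate $g\widetilde{T}$ would ``by equivariance'' descend to an isotopy reducing $|T\cap\Sigma|$---does not go through as stated. There is no equivariance to invoke: the hypothetical isotopy moves one translate of $\widetilde{T}$ off one lift of $S$, and nothing forces it to be $G$-equivariant or to respect the other spheres of $\widetilde{\Sigma}$; pushing $g\widetilde{T}$ off $\widetilde{S}$ may well increase its intersection with neighbouring spheres of $\widetilde{\Sigma}$, so no reduction of $|T\cap\Sigma|$ follows. The paper instead proves crossing \emph{directly}, with no appeal to minimality. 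Since $g\widetilde{T}\cap\widetilde{S}\ne\emptyset$, the vertex $v_0\in T$ corresponding to $\widetilde{S}$ is interior to the subtree $T(g\widetilde{T})$; hence on \emph{each} side of $\widetilde{S}$ there is a terminal vertex of $T(g\widetilde{T})$, i.e.\ a disc piece $D$ of $g\widetilde{T}$ in some $3$-punctured $3$-sphere $P$. That disc separates the two boundary spheres $Q_1,Q_2$ of $P$ not meeting $D$, and a short arc in $P$ crossing $D$ once extends (through the noncompact regions beyond $Q_1,Q_2$) to a proper line with algebraic intersection $\pm1$ with $g\widetilde{T}$. By Poincar\'e duality this exhibits a pair of ends, on the given side of $\widetilde{S}$, separated by $g\widetilde{T}$. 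Doing this on both sides shows all four sets $E_S^{\pm}\cap E_{gT}^{\pm}$ are nonempty, which is exactly the Scott--Swarup crossing condition. This is the missing idea in your sketch: crossing is certified by the \emph{terminal disc pieces} of the normal form via an ends/Poincar\'e-duality argument, not by contradicting a minimality property.
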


Our proof is based on the normal form for spheres in $M$ due to
Hatcher~\cite{Ht}, which we recall in Section~\ref{S:norm}. We extend a
sphere $\Sigma$ in the isotopy class $B$ to a maximal system of
spheres and consider a sphere $S$ in the isotopy class of $A$ in
normal form with respect to this system. We then show in
Section~\ref{S:pf} that, when $S$ is in normal form, the number
of components of intersection between $S$ and $\Sigma$ is the
algebraic intersection number between the isotopy classes $ A =
[S]$ and $B$.

Our methods also show that, if $A_1$,\dots $A_n$ is a collection of
isotopy classes of embedded spheres, each pair of which can be
represented by disjoint spheres, then all the classes $A_i$ can be
simultaneously represented by disjoint spheres. We prove this in
Theorem~\ref{T:disj}.

The sphere complex associated to $M$ is a simplicial complex whose
vertices are the isotopy classes of embedded spheres in $M$. A
set of isotopy classes of embedded spheres in $M$ is deemed to
span a simplex if they can be realized disjointly in $M$.  This
is an analogue of the curve complex associated to a surface. The
topological properties of the sphere complex have been studied by
Hatcher, Hatcher-Vogtmann and Hatcher-Wahl in~\cite{Ht},
\cite{HV1}, \cite{HV2}, \cite{HV3}, \cite{HV4}, \cite{HW}. Culler
and Vogtmann have constructed a contractible complex
\textit{Outer space} on which the outer automorphism group
$Out(F_n)$ of the free group $F_n$ acts discretely and with
finite stabilizers~\cite{CV}.  This is an analogue of Teichm\"
uller space of surface on which the mapping class group of the
surface acts. Culler and Morgan have constructed a
compactification of Outer space much like Thurston's
compactification of Teichm\" uller space~\cite{CM}. The curve
complex has proved to be fruitful in studying Teichum\"uller
space (see, for instance, \cite{Iv1}, \cite{Iv2}).

The \emph{geometric intersection number} of curves on a surface has
been used to give constructions like the space of measured laminations
whose projectivization is the boundary of Teichm\" uller space,
~\cite{Lu}, as well as to study geometric properties, including
hyperbolicity of the curve complex in ~\cite{Bo},~\cite{MM}. One may
hope that the \emph{geometric intersection number } of embedded
spheres in $M$ might be useful to give such constructions in case
sphere complex and Outer space.  The sphere complex is useful for
studying the mapping class group of $M$, $Out(F_n)$ and Outer space.

An important ingredient of our proofs is the observation that if $S$
and $T$ are embedded spheres in $M$ and $S$ is in normal form with
respect to a maximal system of spheres containing $T$, then $S$ and
$T$ intersect minimally. This is somewhat analogous to results for
geodesics and least-area surfaces~\cite{FHS1}\cite{FHS2}. Further the
components of intersection correpsond to \emph{crossing}. This is very
similar to the case of geodesics, where intersections correspond to
linking of end points.

\section{Normal spheres}\label{S:norm}

We recall the notion of \emph{normal sphere systems} from~\cite{Ht}.

\begin{definition}
A smooth, embedded $2$-sphere in $M$ is said to be essential if it
does not bound a $3$-ball in $M$.
\end{definition}

\begin{definition}
A \emph{system of $2$-spheres} in $M$ is defined as a finite
collection of disjointly embedded, pair-wise non-isotopic, essential
smooth $2$-spheres $S_i \subset M$.
\end{definition}

Let $\Sigma = \cup_j \Sigma_j$ be a maximal system of $2$-sphere
in $M$. Splitting $M$ along $\Sigma$, then produces a finite
collection of $3$-punctured $3$-spheres $P_k$. Here a
$3$-punctured $3$-sphere is the complement of the interiors of
three disjointly embedded $3$-balls in a $3$-sphere.

\begin{definition}
 A system of $2$-spheres $S = \cup_i S_i$ in $M$ is said to be in
\emph{normal form with respect to $\Sigma$ } if each $S_i$ either
coincides with a sphere $\Sigma_j$ or meets $\Sigma$ transversely
in a non empty finite collection of circles splitting $S_i$ into
components called pieces, such that the following two conditions
hold in each $P_k$:
\begin{enumerate}
\item
Each piece in $P_k$ meets each component of $\partial P_k$ in at most
one circle.
\item
No piece in $P_k$ is a disk which is isotopic, fixing its boundary, to a
disk in $\partial P_k$.
\end{enumerate}
\end{definition}

Thus each piece is a disk, a cylinder or a pair of pants. A disk piece
has its boundary on one component of $\partial P_k$ and separates the
other two components of $\partial P_K$.

Recall the following result from~\cite{Ht}.

\begin{proposition}[Hatcher]
Every system $S \subset M$ can be isotoped to be in normal form with
respect to $\Sigma$. In particular, every embedded sphere $S$ which
does not bound a ball in $M$ can be isotoped to be in normal form with
respect to $\Sigma$.
\end{proposition}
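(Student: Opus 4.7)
The plan is to first put $S$ into general position with $\Sigma$ by a small isotopy, so that $S \cap \Sigma$ consists of finitely many transverse circles, and then to remove the two types of obstructions to normal form by reducing the complexity $c(S) := |S \cap \Sigma|$ through successive ambient isotopies. Since $c(S)$ is a non-negative integer, strict monotonicity of the reductions will force termination.

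I would first eliminate violations of condition (2). Suppose some piece $D$ in a region $P_k$ is a disk isotopic rel $\partial D$ to a disk $D' \subset \partial P_k$. Because $P_k$ is a $3$-punctured $3$-sphere, and hence irreducible, the embedded sphere $D \cup D'$ bounds a $3$-ball $B \subset P_k$. Choosing $D$ so that $D'$ is innermost on its boundary sphere of $P_k$ among all such bad disk configurations ensures $B \cap S = D$. An ambient isotopy of $M$ pushing $D$ through $B$ and slightly past $D'$ then removes the circle $\partial D$ from $S \cap \Sigma$ without introducing new intersections, so $c(S)$ drops by at least one.

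Next I would address condition (1). Suppose some piece $\pi \subset P_k$ meets a boundary sphere $\Sigma_j$ of $\partial P_k$ in more than one circle. Choose two circles $c_1,c_2$ of $\pi \cap \Sigma_j$ that cobound an annulus $A \subset \Sigma_j$ whose interior is disjoint from $S \cap \Sigma_j$; the existence of such a pair follows because $\Sigma_j$ is a sphere and $\pi \cap \Sigma_j$ is a finite family of disjoint circles on it. Since $\pi$ is planar (it is a component of $S$ cut along circles), $c_1$ and $c_2$ cobound a planar subsurface $\pi' \subset \pi$. The embedded sphere $A \cup \pi'$ bounds a $3$-ball in $M$ by irreducibility of $M$, and this ball can be used to realize a surgery of $S$ across $A$ as an ambient isotopy: the two circles $c_1,c_2$ are removed, and $\pi$ is replaced by simpler pieces. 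This strictly decreases $c(S)$. The surgery may create new violations of condition (2), but another round of the previous step removes them while keeping $c(S)$ strictly smaller than at the beginning of the step. Iterating these two reductions terminates, and the result is a system in normal form.

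The main obstacle, I expect, is condition (1). Removal of trivial disk pieces (condition (2)) is a classical innermost-disk argument, directly realized by an isotopy because the bounding ball is produced by irreducibility. Condition (1) requires more care: one must identify the right pair of parallel intersection circles, verify that the candidate sphere $A \cup \pi'$ genuinely bounds a ball (using irreducibility together with the planarity of $\pi$), and check that the surgery can be realized as an ambient isotopy of $S$ rather than a mere topological modification. Once this is established, the interleaved reduction scheme, monitored by $c(S)$, gives the result; and the same mechanism applies to an embedded essential sphere by regarding $\{S\}$ as a one-element system.
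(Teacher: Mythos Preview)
The paper does not give a proof of this proposition; it merely quotes the result from Hatcher~\cite{Ht}. So there is no argument in the present paper to compare your attempt against.

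Your treatment of condition~(2) via innermost disks in the irreducible pieces $P_k$ is standard and correct. Your argument for condition~(1), however, breaks down at several points. First, $M=\sharp_n S^2\times S^1$ is \emph{not} irreducible---it is the archetypal reducible closed $3$-manifold---so the assertion that ``$A\cup\pi'$ bounds a $3$-ball in $M$ by irreducibility of $M$'' is false as stated; any appeal to irreducibility must take place inside a single $P_k$. Second, $A\cup\pi'$ need not be a sphere: $A$ is an annulus, and a planar piece $\pi$ does not in general contain a subsurface $\pi'$ with $\partial\pi'=c_1\cup c_2$ (if $\pi$ is a pair of pants there is no such proper subsurface at all); when $\pi'$ is itself an annulus the union $A\cup\pi'$ is a torus, not a sphere, and when $\pi'$ carries additional boundary circles the union is not even closed. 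Third, even granting a bounding ball, you have not shown that the proposed ``surgery across $A$'' is realised by an ambient isotopy of $S$ rather than a genuine surgery changing the topological type of $S$; you correctly flag this as the main obstacle but do not resolve it. The upshot is that your reduction scheme for condition~(1) does not go through as written, and a different mechanism---carried out entirely inside the $P_k$ and exploiting that they are punctured $3$-spheres---is required.
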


We recall some constructions from~\cite{Ht}. First, we
associate a tree $T$ to $\widetilde{M}$ corresponding to the
decomposition of $M$ by $\Sigma$.  Let $\widetilde{\Sigma}$ be the
pre-image of $\Sigma$ in $\widetilde{M}$. The closure of each
component of $\M-\widetilde\Sigma$ is a $3$-punctured $3$-sphere. The
vertices of the tree are of two types, with one vertex corresponding
to the closure of each component of $\M-\widetilde\Sigma$ and one
vertex for each component of $\widetilde\Sigma$. An edge of
$T$ joins a pair of vertices if one of the vertices corresponds to the
closure of a component $X$ of $\M-\widetilde\Sigma$ and the other
vertex corresponds to a component of $\widetilde\Sigma$ that is in the
boundary of $X$. Thus, we have a $Y$-shaped subtree corresponding to
each complementary component. We pick an embedding of $T$ in $\M$
respecting the correspondences.

Given a sphere $S$ in normal form with respect to $\Sigma$ and a lift
$\widetilde{S}$ of $S$ to $\widetilde{M}$, we associate a tree
$T(\widetilde{S})$ corresponding to the decomposition of
$\widetilde{S}$ into pieces.  The tree has two types of vertices,
vertices corresponding to closures of components of $\widetilde{S} -
\widetilde{\Sigma}$ (i.e., pieces) and vertices corresponding to each
component of $\widetilde{S} \cap \widetilde{\Sigma}$. Edges join a
pair of vertices if one of the vertices corresponds to a piece and the
other to a boundary component of the piece.

In~\cite{Ht}, it is shown that $T(\widetilde{S})$ is a tree. Moreover,
the inclusion $\widetilde{S} \incl \widetilde{M}$ induces a natural
inclusion map $T( \widetilde{S}) \incl T$. So we can view
$T(\widetilde{S})$ as a subtree of $T$. The bivalent vertices of $T$
correspond to spheres components in $\widetilde{\Sigma}$, i.e., lifts
of the spheres $\Sigma_j$ and their translates.

\section{Algebraic and Geometric Intersection numbers}\label{S:pf}

Consider now two isotopy classes $A$ and $B$ of embedded spheres in
$M$ . Choose an embedded sphere $\Sigma_1$ in the isotopy class $B$
and extend this to a maximal collection $\Sigma$ of spheres.  Let $S$
be a representative for $A$ in normal form with respect to
$\Sigma$. Theorem~\ref{T:main} is equivalent to showing that
$\widetilde{I}(A,[\Sigma_j])= I(A, [\Sigma_j])$ for $j=1$. We begin by
showing the non-trivial inequality here.

\begin{lemma}\label{L:ineq}
 If $A =[S]$ is the isotopy class of the embedded sphere $S$ in $M$, then for
 the isotopy class $[\Sigma_j]$ of $\Sigma_j$ in $M$,
 $\widetilde{I}(A,[\Sigma_j])\geq I(A, [\Sigma_j])$.
\end{lemma}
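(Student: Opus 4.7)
My plan is to count circles of $S\cap\Sigma_j$ via the tree injection $T(\widetilde{S})\hookrightarrow T$ against translates of $\widetilde{\Sigma_j}$ meeting $\widetilde{S}$, and then to identify each intersecting translate as a crossing one.

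The combinatorial setup is as follows. Since $T(\widetilde{S})$ injects into $T$, any two distinct intersection circles of $\widetilde{S}\cap\widetilde{\Sigma}$ correspond to distinct vertices of $T$, so they lie on distinct sphere components of $\widetilde{\Sigma}$. Hence $\widetilde{S}\cap g\widetilde{\Sigma_j}$ is either empty or a single circle for every $g\in G$. Because $G$ acts freely on $\widetilde{M}$ and $\widetilde{S}$ is simply connected, the covering projection restricts to a homeomorphism $\widetilde{S}\to S$, so every circle of $S\cap\Sigma_j$ lifts to a unique circle on $\widetilde{S}$ contained in a unique $\widetilde{S}\cap g\widetilde{\Sigma_j}$. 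Together,
\[
|S\cap\Sigma_j|\;=\;|\{g\in G:\widetilde{S}\cap g\widetilde{\Sigma_j}\neq\emptyset\}|.
\]

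The heart of the argument is to show that each such $g$ gives a crossing. Suppose for contradiction that $\widetilde{S}\cap g\widetilde{\Sigma_j}$ is a single circle $c$ yet $[\widetilde{S}]$ and $[g\widetilde{\Sigma_j}]$ do not cross, so that they can be represented by disjoint embedded spheres in $\widetilde{M}$. An innermost-disk analysis starting from the one-circle intersection produces a $3$-ball $B\subset\widetilde{M}$ bounded by a disk $D\subset\widetilde{S}$ (a half of $\widetilde{S}$ cut off by $c$) together with a disk $E\subset g\widetilde{\Sigma_j}$. Sliding $D$ across $B$ to a push-off of $E$ eliminates $c$, and since $g\widetilde{\Sigma_j}$ is disjoint from the remaining components of $\widetilde{\Sigma}$, no new intersections with $\widetilde{\Sigma}$ are created. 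Extending this modification $G$-equivariantly and descending to $M$ produces a representative of $A$ meeting $\Sigma_j$ in strictly fewer circles than the normal form $S$, contradicting the minimality of normal form intersections from~\cite{Ht}. Hence every intersecting translate is crossing, giving
\[
I(A,[\Sigma_j])\;\leq\;|S\cap\Sigma_j|\;\leq\;\widetilde{I}(A,[\Sigma_j]).
\]

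The main obstacle I anticipate is the $G$-equivariant descent of the disk-swap isotopy: the ball $B$ is not a priori $G$-invariant, and translates $hB$ could meet other lifts $h'\widetilde{S}$ or $h'\widetilde{\Sigma_i}$. I expect to resolve this either by using the tree injection $T(\widetilde{S})\hookrightarrow T$ and normal-form condition (2) to constrain how $\widetilde{\Sigma}$ and the translates of $\widetilde{S}$ intersect $B$ -- allowing the equivariant isotopy to be built piece by piece -- or, equivalently, by extending Hatcher's minimality of normal form directly to the universal cover, noting that $\widetilde{\Sigma}$ remains a maximal system of disjoint essential spheres whose complementary regions are still $3$-punctured $3$-spheres, so that the reduction moves of~\cite{Ht} apply verbatim.
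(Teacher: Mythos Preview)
Your combinatorial setup---using the injection $T(\widetilde{S})\hookrightarrow T$ to identify $|S\cap\Sigma_j|$ with the number of translates $g\widetilde{\Sigma_j}$ meeting $\widetilde{S}$---matches the paper's exactly. The divergence is in the crossing step, and your version has two genuine gaps. First, the phrase ``innermost-disk analysis \dots\ produces a 3-ball $B$'' is not justified: $\widetilde{M}$ is far from irreducible (it is the universal cover of $\sharp_n S^2\times S^1$), so a sphere $D\cup E$ built from halves of $\widetilde{S}$ and $g\widetilde{\Sigma_j}$ need not bound a ball. To get the ball you would have to unpack the non-crossing hypothesis in terms of ends, locate the complementary region with empty end-set, argue it is compact, and then invoke the Poincar\'e conjecture---none of which is ``innermost-disk analysis.'' Second, and more seriously, the equivariant descent you flag as an obstacle is a real one that your sketch does not resolve: the ball $B$ need not project embeddedly to $M$, so the swapped sphere $\widetilde{S}'$ need not cover an \emph{embedded} sphere in $M$, and your appeal to ``minimality of normal form'' then has nothing to bite on. Your fallback of running Hatcher's argument directly in $\widetilde{M}$ is also problematic, since $\widetilde{\Sigma}$ is an infinite system and Hatcher's reduction and uniqueness statements are for finite systems in a compact manifold.

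The paper sidesteps all of this by proving the crossing claim \emph{directly}, with no isotopies or disk swaps. It uses the Scott--Swarup characterisation of crossing via the partition of ends: the vertex $v_0\in T$ corresponding to $g\widetilde{\Sigma_j}$ is interior to $T(\widetilde{S})$, so on each side of $v_0$ there is a terminal vertex of $T(\widetilde{S})$, i.e.\ a disc piece $D$ in some $3$-punctured $3$-sphere $P$ separating the two boundary spheres $Q_1,Q_2$ of $P$ not meeting $\widetilde{S}$. A proper path crossing $D$ once, with its two rays pushed into the non-compact regions beyond $Q_1$ and $Q_2$, exhibits two ends on the same side of $g\widetilde{\Sigma_j}$ but on opposite sides of $\widetilde{S}$; doing this on both sides of $v_0$ shows all four sets $E_S^{\pm}\cap E_\Sigma^{\pm}$ are nonempty. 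This is elementary and self-contained, whereas your route, even if the gaps were filled, would rest on deeper input (irreducibility substitutes, equivariant isotopy control) than the lemma warrants.
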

\begin{proof}
The sphere $S$, which is in normal form with respect to $\Sigma$,
represents the class $A$. We shall show that the number of components
of intersection of $S$ with $\Sigma_j$ is
$\widetilde{I}(A,[\Sigma_j])$. As the geometric intersection number is
the minimum of the number of components of intersection of spheres in
the isotopy classes, the lemma is an immediate consequence of this
claim.

Fix a lift $\widetilde{S}$ of $S$. The components of
$S\cap\Sigma_j$ are homotopically trivial circles in $M$. These
lift to circles of intersection between $\widetilde S$ and
components of the pre-image of $\Sigma_j$. These correspond to
vertices of $T(\widetilde{S})$. As $T(\widetilde{S})$ is a tree
which embeds in $T$, different circles of intersection of $S$ and
$\Sigma_j$ correspond to intersections of $\tilde{S}$ with
different components of the pre-image of $\Sigma_j$. It follows
that the number of components of intersection of $S$ with
$\Sigma_j$ is the number of components of the pre-image of
$\Sigma_j$ that intersect $\widetilde{S}$.

The main observation needed is the following lemma.
\begin{lemma}\label{L:cross}
If $\widetilde{S}$ intersects a component $\widetilde{\Sigma_j}$ of
the pre-image of $\Sigma_j$, then the spheres $\widetilde{S}$ and
$\widetilde{\Sigma_j}$ cross.
\end{lemma}
\begin{proof}
Assume that $\widetilde{S}$ intersects the component
$\widetilde{\Sigma_j}$ of the pre-image of $\Sigma_j$. The sphere
$\widetilde{\Sigma_j}$ corresponds to a vertex $v_0$ of $T$. As
$\widetilde{S}$ intersects $\widetilde{\Sigma_j}$ and $S$ is in normal
form, the vertex $v_0$ is an interior vertex of $T(\widetilde{S})$.

We recall the notion of crossing due to Scott and Swarup, which
by~\cite{Ga} is equivalent to the notion we use. The spheres
$\widetilde{S}$ and $\widetilde{\Sigma_j}$ partition the ends of $\M$
into pairs of complementary subsets $E_S^{\pm}$ and
$E_{\Sigma}^{\pm}$, corresponding to the components of the complement
of the respective spheres in $\widetilde{M}$. The spheres
$\widetilde{S}$ and $\widetilde{\Sigma_j}$ cross if all the four
intersections $E_S^{\pm}\cap E_{\Sigma}^{\pm}$ are non-empty.

A properly embedded path $c:\R \to \M$ induces a map from the ends
$\pm\infty$ of $\R$ to the ends of $\M$. Thus, we can associate to $c$
a pair of ends $c_{\pm}$. We say that the path $c$ is a path from
$c_-$ to $c_+$. Poincar\'e duality gives a useful criterion for when
two ends $E$ and $E'$ of $\M$ are in different equivalence classes with
respect to the partition corresponding to $\widetilde{S}$. Namely, $E$
and $E'$ are in different equivalence classes if and only if there is
a proper path $c$ from $E$ to $E'$ so that $c\cdot \widetilde{S}=\pm
1$, with $c\cdot \widetilde{S}$ the intersection pairing obtained from
the cup product using the duality between homology and cohomology with
compact support.

The ends of $\M$ can be naturally identified with the ends of the tree
$T$. The sets $E_{\Sigma}^\pm$ correspond to the ends of the two
components of $T-\{v_0\}$. It is easy to see that $\widetilde{\Sigma}$
and $\widetilde{S}$ cross if and only if each of the sets
$E_{\Sigma}^\pm$ contain pairs of ends $E_1$ and $E_2$ which are in
different equivalence classes with respect to the partition
corresponding to $\widetilde{S}$. By symmetry, it suffices to consider
the case of $E_{\Sigma}^+$. Let $X$ denote the closure of the
component of $\M-\widetilde\Sigma_j$ with $ends(X)=E_{\Sigma}^+$.

As $v_0$ is an internal vertex of the tree $T(\widetilde S)$, there is
a terminal vertex $w$ of $T(\widetilde S)$ contained in $X$. A
terminal vertex of $T(\widetilde S)$ corresponds to a piece which is a
disc $D$ in a $3$-punctured sphere $P$, with $P$ the closure of a
component of $\widetilde{M}-\widetilde{\Sigma}$. Let $Q_1$ and $Q_2$
denote the boundary components of $P$ disjoint from $D$ (hence from
$S$). Then $D$ separates $Q_1$ and $Q_2$.

For $i=1,2$, let $W_i$ denote the closure of the component of $\M-Q_i$
which does not contain $S$. As $Q_i$ is the lift of an essential
sphere, and $\M$ is simply-connected, $Q_i$ is non-trivial as an
element of $H_2(\M)$. Hence $W_i$ is non-compact. By construction
$W_i\subset X$, hence the ends of $W_i$ are contained in $E_\Sigma^+$.

As $D$ separates $Q_1$ and $Q_2$, (after possibly interchanging $Q_1$
and $Q_2$) there is a path $c:[0,1]\to P$ intersecting $S$
transversely in one point (with the sign of the intersection $+1$) so
that $c(0)\in Q_1$ and $c(1)\in Q_2$. As $W_1$ and $W_2$ are
non-compact, we can extend $c$ to a proper function $c:\R\to \M$ with
$c((-\infty,0))\subset W_1$ and $c((1,\infty))\subset W_2$.

The ends $E_1$ and $E_2$ of $c$ are ends of $X$ (as $W_i\subset X$ for
$i=1,2$). Further, by construction $c\cdot S=1$. It follows that the
ends $E_1,E_2\subset E_\Sigma^+$ are in different components with
respect to the partition corresponding to $S$. By symmetry, we can
find a similar pair of ends in $E_\Sigma^-$. It follows that
$\widetilde{S}$ and $\widetilde{\Sigma}$ cross.

\end{proof}

We now complete the proof of Lemma~\ref{L:ineq}. We have seen that the
number of components of $S\cap \Sigma_j$ is the number of components
of the pre-image of $\Sigma_j$ which intersect $\widetilde{S}$. For a
fixed lift $\widetilde{\Sigma_j}$ of $\Sigma_j$, the components of the
pre-images of $\Sigma_j$ are the translates $g\widetilde{\Sigma_j}$ of
$\widetilde{\Sigma_j}$.

By Lemma~\ref{L:cross}, it follows that if $\widetilde S$ intersects
$g\widetilde{\Sigma_j}$, then $\widetilde{S}$ crosses
$g\widetilde{\Sigma_j}$. The converse of this is obvious. By the
definition of algebraic intersection number, Lemma~\ref{L:ineq}
follows.

\end{proof}

\begin{proof}[Proof of Theorem~\ref{T:main}]

We have seen that it suffices to consider the case when $A=[S]$,
$B=[\Sigma_1]$ and $S$ is in normal form with respect to $\Sigma$. By
Lemma~\ref{L:ineq}, $\widetilde{I}(A,B)\geq I(A, B)$.

Conversely, let $S$ and $\Sigma_1$ be embedded spheres with $A=[S]$,
$B=[\Sigma_1]$ and $I(A,B)=\vert S\cap \Sigma_1\vert$. Let
$\widetilde{S}$ and $\widetilde{\Sigma_1}$ be lifts of $S$ and
$\Sigma_1$, respectively, to $\M$. Observe that (distinct) components
of intersection of $S$ with $\Sigma_1$ lift to (distinct) components
of intersection of $\widetilde{S}$ with translates of
$\widetilde{\Sigma_1}$. Hence the number of translates of
$\widetilde{\Sigma_1}$ that intersect $\widetilde{S}$ is at most
$I(A,B)$. As $\tilde{I}(A,B)$ is the number of translates of
$\widetilde{\Sigma_1}$ that \emph{cross} $\widetilde{S}$, and
components that cross must intersect, it follows that
$\tilde{I}(A,B)\leq I(A,B)$.

This completes the proof of the theorem.

\end{proof}

Our methods also yield the following result. This also follows from
the work of Scott and Swarup, see ~\cite{SS}.

\begin{theorem}\label{T:disj}
 If $A_1$,\dots,$A_n$ are isotopy classes of embedded spheres in $M$
 such that, for $1\leq i,j\leq n$, $A_i$ and $A_j$ can be represented
 by disjoint spheres, then there exist disjointly embedded spheres
 $S_i$, $1\leq i\leq n$, such that $A_i = [S_i]$.
\end{theorem}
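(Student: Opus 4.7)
My plan is to proceed by induction on $n$. The cases $n\leq 2$ are immediate, so I focus on the inductive step. Assuming the theorem for $n-1$ and applying it to $A_1,\dots,A_{n-1}$ (whose pairwise disjointness hypothesis is inherited from the full collection), I obtain pairwise disjointly embedded representatives $S_1,\dots,S_{n-1}$ with $[S_i]=A_i$.

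Next, I extend $\{S_1,\dots,S_{n-1}\}$ to a maximal system of spheres $\Sigma=\cup_j\Sigma_j$ in $M$, arranged so that $S_i$ appears as the component $\Sigma_i$ for $1\leq i\leq n-1$. I then pick any sphere representing $A_n$ and use Hatcher's proposition to isotope it into normal form with respect to $\Sigma$; call the result $S_n$.

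The key claim is that $S_n$ is disjoint from each $S_i=\Sigma_i$ for $i<n$. Indeed, by the computation carried out in the proof of Lemma~\ref{L:ineq}, for $S_n$ in normal form the number of components of $S_n\cap\Sigma_i$ equals the algebraic intersection number $\widetilde{I}(A_n,A_i)$. Theorem~\ref{T:main} identifies this with the geometric intersection number $I(A_n,A_i)$, which vanishes by hypothesis since $A_n$ and $A_i$ can be represented by disjoint spheres. Hence $|S_n\cap S_i|=0$ for every $i<n$, and combined with the inductive hypothesis $S_1,\dots,S_n$ is the desired system of disjointly embedded representatives.

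The only subtlety I anticipate is that when $S_n$ is placed in normal form it could coincide, as a subset of $M$, with one of the $S_i$ for $i<n$; this cannot happen under the standing convention that the classes $A_1,\dots,A_n$ are distinct, since then $S_n$, which is isotopic to $A_n$, cannot equal any $S_i$ belonging to a different isotopy class. (If duplicate classes are allowed in the statement, one first trims the list.) With that bookkeeping out of the way, the argument is essentially mechanical, since the real content has already been packaged into Lemma~\ref{L:ineq} and Theorem~\ref{T:main}: Hatcher's normal form produces a representative of $A_n$ whose geometric intersection with each $\Sigma_i$ realizes the algebraic intersection number, and Theorem~\ref{T:main} forces that number to coincide with the geometric intersection number, which is zero.
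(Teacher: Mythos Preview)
Your approach is essentially the same as the paper's: induct on $n$, realize $A_1,\dots,A_{n-1}$ disjointly, extend to a maximal system $\Sigma$, and put a representative of $A_n$ into Hatcher normal form so that the proof of Lemma~\ref{L:ineq} forces $S_n\cap S_i=\emptyset$ for $i<n$.

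There is one genuine omission. Both the definition of a system of spheres and Hatcher's normal form proposition require the spheres involved to be \emph{essential}. You address the possibility of repeated classes but not the possibility that some $A_i$ is inessential (i.e., bounds a $3$-ball). If some $A_i$ with $i<n$ is inessential, you cannot include $S_i$ in a system and hence cannot extend $\{S_1,\dots,S_{n-1}\}$ to a maximal $\Sigma$; and if $A_n$ itself is inessential, you cannot isotope its representative into normal form. The paper handles this by treating the inessential case separately: if (say) $A_n$ is inessential, one uses the inductive hypothesis to realize $A_1,\dots,A_{n-1}$ disjointly and then lets $S_n$ be the boundary of a small ball missing the other $S_i$. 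With this easy case disposed of, one may assume all $A_i$ are essential, and your argument goes through verbatim.

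A minor remark: you invoke Theorem~\ref{T:main} to pass from $|S_n\cap S_i|=\widetilde I(A_n,A_i)$ to $I(A_n,A_i)=0$, but only the trivial inequality $\widetilde I\leq I$ is needed here, so the argument does not actually depend on the hard direction of Theorem~\ref{T:main}.
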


\begin{proof}
We prove this by induction on $n$. For $n=1,2$, the conclusion is
immediate from the hypothesis. Assume that the result holds for
$n=k$ and consider a collection $A_i$ as in the hypothesis with
$n=k+1$.

Suppose one of the spheres, which we can assume without loss of
generality is $A_n$, is not essential. By the induction hypothesis,
there are disjoint embedded spheres $S_i$, $1\leq i<n$, with
$[S_i]=A_i$. Choose a $3$-ball disjoint from the spheres $S_i$, $1\leq
i<n$ and let $S_n$ be its boundary. Then the spheres $S_i$, $1\leq
i\leq n$, give the required collection.

Thus we may assume that all the isotopy classes $A_i$ of spheres
are essential. By induction hypothesis, there are disjoint
embedded spheres $S_i$, $1\leq i<n$, with $[S_i]=A_i$. As these
are essential by our assumption, we can extend the collection
$S_i$ to a maximal system of spheres. We let $S_n$ be a sphere in
normal form with respect to this collection. By hypothesis,
$I(S_n,S_i)=0$ for $1\leq i\leq n$. By the proof of
Lemma~\ref{L:ineq}, it follows that $S_n$ is disjoint from $S_i$.
Thus, $S_i$, $1\leq i\leq n$, is a collection of disjoint embedded
spheres with $A_i=[S_i]$.

\end{proof}

\begin{remark}
 The above theorem shows that the sphere complex
associated to $M$ is a full complex in the sense that if $ V_1,
V_2,....,V_k$ are the vertices of the sphere complex and if there
is an edge between every pair $V_i, V_j$ of vertices, where
$1\leq i,j \leq k$, then these vertices bound a simplex in the
sphere complex.
\end{remark}

\bibliographystyle{amsplain}

\begin{thebibliography}{10}


\bibitem{Bo} Bowditch, Brian H.
\textit{Intersection numbers and the hyperbolicity of the complex of curves.}
preprint.


\bibitem{CM} Culler, M., Morgan, J.M.
\textit{Group actions on R-trees}
 Proc. Lond. Math. Soc. \textbf{55} (1987), 571-604.

\bibitem{CV} Culler, M., Vogtmann, K.
\textit{Modulii of graphs and automorphisms of free group}
 Invent. Math. \textbf{87} (1986) no.1, 91-119.

\bibitem{FHS1} Freedman, Michael; Hass, Joel; Scott, Peter
Closed geodesics on surfaces.
Bull. London Math. Soc. \textbf{14} (1982), no. 5, 385--391.

\bibitem{FHS2}  Freedman, Michael; Hass, Joel; Scott, Peter
\textit{Least area incompressible surfaces in $3$-manifolds.}
Invent. Math.  \textbf{71}  (1983),  no. 3, 609--642.

\bibitem{Ga} Gadgil, S.
\textit{Embedded spheres in $ \sharp_n S^2 \times S^1 $}.
Topology and its applications \textbf{153}  (2006) 1141-1151.


\bibitem{Ht} Hatcher, Allen \textit{Homological stability for
automorphism groups of free groups.} Comment. Math. Helv.
\textbf{70}  (1995),  39--62.

\bibitem{HV1} Hatcher, Allen; Vogtmann, Karen
\textit{Isoperimetric inequalities for automorphism groups of free groups}
Pacific J. Math.  \textbf{173}  (1996), 425--441.

\bibitem {HV2} Hatcher, Allen; Vogtmann, Karen
\textit{The complex of free factors of a free group.}
Quart. J. Math. Oxford Ser. (2)  \textbf{49}  (1998), 459--468.

\bibitem{HV3} Hatcher, Allen; Vogtmann, Karen
\textit{Rational homology of ${\rm Aut}(F\sb n)$.}
Math. Res. Lett.  \textbf{5} (1998), 759--780.

\bibitem{HV4} Hatcher, Allen; Vogtmann, Karen
\textit{Homology Stability for Outer Automorphisms of Free Groups}
preprint.

\bibitem{HW} Hatcher, Allen; Wahl, Nathalie
\textit{Stabilization for the  Automorphisms of Free Groups with Boundaries}
preprint.

\bibitem{Iv1} Ivanov, Nikolai V.
\textit{Complexes of curves and Teichm\"uller spaces. (Russian)}
Mat. Zametki \textbf{49} (1991), 54--61, 158

\bibitem{Iv2} Ivanov, Nikolai V.
\textit{Automorphism of complexes of curves and of Teichm\"uller spaces.}
Internat. Math. Res. Notices \textbf{14} (1997), 651--666.


\bibitem{La} Laudenbach, Francois
\textit{Topologie de la dimension trois: homotopie et isotopie. (French)}
Ast\'erisque, No. \textbf{12}.
Soci\'et\'e Math\'ematique de France, Paris, 1974.

\bibitem{Lu} Luo, Feng. , Stong, R.
\textit{Cauchy type inequality and the space of measured
laminations,I.} arxiv.math/ 0006007v/ [math.GT] 2 Jun 2000.


\bibitem{MM}  Masur, H. A. ; Minsky, Yair.
\textit{Geometry of the complex of curves. I. Hyperbolicity.}
Invent. Math.  \textbf{138} (1999), 103--149.

\bibitem{Mi2} Minsky, Yair.
\textit{The classification of Kleinian surface groups, I: Models and bounds}
preprint.

\bibitem{SS} Scott, Peter; Swarup, Gadde A.
\textit{Splittings of groups and intersection numbers.}
Geom. Topol.  \textbf{4} (2000), 179--218.

\bibitem{SW} Scott, Peter; Wall, Terry
\textit{Topological methods in group theory.}
Homological group theory (Proc. Sympos., Durham, 1977), 137--203,
London Math. Soc. Lecture Note Ser., \textbf{36},
Cambridge Univ. Press, Cambridge-New York, 1979.

\bibitem{St} Stallings, John
\textit{Group theory and three-dimensional manifolds.}
Yale Mathematical Monographs, \textbf{4},
Yale University Press, New Haven, Conn.-London, 1971.

\end{thebibliography}

\end{document}